\colorlet{shadecolor}{blue!15}
\newtheorem{theorem}{Theorem}
\newtheorem{corollary}[theorem]{Corollary}
\newtheorem{lemma}[theorem]{Lemma}
\newtheorem{definition}[theorem]{Definition}
\newtheorem{remark}[theorem]{Remark}
\newcommand{\bbE}{\mathbb{E}}
\newcommand{\bbP}{\mathbb{P}}
\newcommand{\bbR}{\mathbb{R}}
\newcommand{\bbZ}{\mathbb{Z}}
\newcommand{\ep}{\varepsilon}
\newcommand{\eps}{\varepsilon}
\newcommand{\var}{\mathrm{Var}}
\newcommand{\infl}{\mathrm{Inf}}
\numberwithin{equation}{section}
\newcommand{\rk}[1]{\bgroup\color{red}%
  \par\medskip\hrule\smallskip%
  \noindent\textbf{#1}%
  \par\smallskip\hrule\medskip\egroup}
\title{Exponential decay of connection probabilities for subcritical Voronoi percolation in $\bbR^d$}
\author{Hugo Duminil-Copin\thanks{Universit\'e de Gen\`eve} \thanks{Institut des Hautes \'Etudes Scientifiques} , Aran Raoufi\addtocounter{footnote}{-1}\footnotemark\ , Vincent Tassion\thanks{ETH Zurich}}
\date{\today}
\begin{document}
\maketitle
 
 \begin{abstract}
 We prove that for Voronoi percolation on $\bbR^d$, there exists $p_c\in[0,1]$ such that 
 \begin{itemize}
 \item for $p<p_c$, there exists $c_p>0$ such that $\bbP_p[0\text{ connected to distance }n]\le \exp(-c_pn)$,
 \item there exists $c>0$ such that for $p>p_c$, $\bbP_p[0\text{ connected to }\infty]\ge c(p-p_c)$.
 \end{itemize}
For dimension 2, this result offers a new way of showing that $p_c(2)=1/2$.
This paper belongs to a series of papers using the theory of algorithms to prove sharpness of the phase transition; see \cite{DumRaoTas16a,DumRaoTas16c}.
 \end{abstract}
%This result offers a new way of showing that $p_c(2)=1/2$, and the first proof of mean-field lower bound for the density of the infinite cluster, even in dimension 2.  
\section{Introduction}

\paragraph{Motivation.} Bernoulli percolation was introduced in \cite{BroHam57} by Broadbent and Hammersley to model the diffusion of a liquid in a porous medium. Originally defined on a lattice, the model was later generalized to a number of other contexts. Of particular interest is the developments of percolation in continuum environment, see \cite{MeeRoy08} for a book on the subject. 

One of the most classical such model is provided by Voronoi percolation, where the Voronoi cells associated to a Poisson point process in $\mathbb R^d$ are colored independently black or white with respective probability $p$ and $1-p$. Voronoi percolation behaves very similarly to Bernoulli percolation, but is harder to study, due to local dependencies (the colors of two disjoint points are always correlated, since two points have always a positive probability to belong to the same cell). Because of these dependencies, several techniques for Bernoulli percolation do not apply, and the anylsis of Voronoi percolation requires to develop new and more robust methods. In the celebrated work \cite{BolRio06}, Bollobás and Riordan proved that Voronoi percolation in the plane undergoes a sharp phase transition at the critical parameter $p=1/2$, meaning that for $p>1/2$, the connected component of black cells containing 0 is infinite with positive probability, while for $p<1/2$, it has probability of having radius larger than $n$ decaying exponentially fast in $n$. Since this result, several other results came to complement the picture on planar Voronoi percolation, including a fine description of the critical behavior \cite{ahlgritas2016,tassion2016crossing}. The recent advances in the understanding of Voronoi percolation were mostly restricted to the planar case, and several fundamental questions, including sharpness of the phase transition, remained widely open in higher dimension. This article provides a first proof of sharpness for Voronoi percolation in any dimension $d\ge2$. As a consequence, it also offers an alternative computation of the critical point in the two-dimensional case.

\bigbreak Let $d\geq 2$ be a positive integer and let $\bbR^d$ be the $d$-dimensional Euclidean space with $\|\cdot\|$ denote the $\ell^2$ norm. For $r>0$, set $\mathsf B_r:=\{y\in\bbR^d:\|y\|\le r\}$ and $\mathsf S_r:=\{y\in\bbR^d:\|y\|= r\}$ for the ball and sphere of radius $r$ around the origin.

Let $\bbP_p$ denote the Voronoi percolation measure with parameter $p$ on $\bbR^d$, that is $\bbP_p$ is the law of two independent point processes $\eta^b$ and $\eta^w$ with respective intensities $p$ and $1-p$ (here, $\eta^b$ and $\eta^w$ are two locally finite subsets of $\mathbb R^d$). Define $\eta = \eta^b \cup \eta^w$. For a point $x \in \eta$, define the {\em Voronoi cell} of $x$
$$ C(x) := \big\{ \, y \in \bbR^d:  \|x - y\| = \min_ {x' \in \eta}  \| x'- y\|  \,\big\}.$$
The measure $\bbP_p$ induces a coloring $\omega$ on the points of $\bbR^d$ defined as follows. 
Set $\omega(y) = 1$ for every $y$ belonging to the Voronoi cell of some $x\in \eta^b$. Set $\omega(y)=0$ for all the other points in $\bbR^d$. We say that $y$ is {\em black} if $\omega(y)=1$, and {\em white} otherwise.

For $x, y \in \bbR^d$, let the event $x$ connected to $y$ (denoted by $\{ x \longleftrightarrow y\}$) be the existence of a continuous path of black points connecting $x$ to $y$. If $X, Y \subset \bbR^d$, the event $\{X \longleftrightarrow Y\}$ denotes existence of $x \in X$ and $y \in Y$ such that $x$ is connected to $y$. Also, $\{0 \longleftrightarrow \infty \}$ is the event that $0$ belongs to an unbounded connected component of black points. For $p\in[0,1]$ and $n\ge0$, define 
%$$\theta_n(p) := \bbP_p[0 \longleftrightarrow \partial \mathsf B_0(n)]\, , \quad  \theta(p) = \bbP_p[0 \longleftrightarrow \infty]\, , \quad
%S_n(p) = \sum_{k=1}^n \theta_k(p).$$ 
$\theta(p) := \bbP_p[0 \longleftrightarrow \infty]$ and 
$\theta_n(p) := \bbP_p[0 \longleftrightarrow \mathsf S_n]$.
Finally, we set $p_c:=\inf\{p\in[0,1]:\theta(p)>0\}.$

 The main result of this paper is the following theorem.

\begin{theorem} \label{thm:main}Fix $d\ge2$. 
For any $p<p_c$, there exists $c_p>0$ such that for any $n \geq 1$,
\begin{equation}\label{eq:main}
 \theta_n(p)\leq  \exp({-c_p n}).
\end{equation}
Furthermore, there exists $c>0$ such that 
$
\theta(p) \geq c(p-p_c)
$ for any $p>p_c$.
\end{theorem}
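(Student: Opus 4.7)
The plan is to follow the OSSS-based strategy developed in \cite{DumRaoTas16a}, adapted to the continuum/Poisson setting of Voronoi percolation. Three ingredients are needed: a discretization that brings the problem into a product-measure framework, the design of a randomized exploration algorithm with small revealment, and the finitary differential inequality of Duminil-Copin--Tassion type.

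The first step is to discretize $\bbR^d$ by boxes of side $\delta>0$. Conditional on the number of Poisson points in each box, the configurations in disjoint boxes are independent; for $\delta$ small, each box contains at most one point of $\eta^b\cup\eta^w$ with probability close to one, and the model restricted to a large window $[-N,N]^d$ is well approximated by a monotone event on a finite product of Bernoulli variables (one per box, recording whether the box is empty, contains a black point, or contains a white point). All bounds derived below will be uniform in $\delta$ and $N$, so the desired estimate transfers to the continuum by letting $\delta\to0$ and $N\to\infty$.

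The core of the argument is the construction, for each $k\in\{\lceil n/2\rceil,\dots,n\}$, of a randomized algorithm $T_k$ which decides the indicator $f:=\mathbbm{1}_{\{0\longleftrightarrow\mathsf S_n\}}$ by exploring the cluster of black cells touching $\mathsf S_k$. The delicate point is that determining the state of a single box requires knowing Poisson points at an a priori unbounded distance (one must identify the nearest point of $\eta$); I would remedy this by querying boxes in annular layers of growing radius around each exploration point, using the Poissonian tail on the distance to the nearest point to keep the expected number of queried boxes under control. Averaging over a uniformly chosen $k$ then produces a randomized algorithm $T$ whose revealment at every box $b$ obeys
\[
\delta_b(T)\ \le\ \frac{C}{n}\,\Sigma_n(p),\qquad \Sigma_n(p) := \sum_{k=0}^{n-1}\theta_k(p),
\]
since a box is queried by $T_k$ only if it lies in the cluster of $\mathsf S_k$, and averaging over $k$ dilutes this probability by a factor $1/n$.

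The OSSS inequality together with Russo's formula, both valid in the discretized product-measure framework, then yield
\[
\theta_n(p)\,(1-\theta_n(p))\ \le\ \sum_b \delta_b(T)\,\infl_b(f)\ \le\ \frac{C\,\Sigma_n(p)}{n}\,\theta_n'(p).
\]
This is the finitary inequality analysed in \cite{DumRaoTas16a}, whose integration on the one hand gives, for $p<p_c$, the exponential decay \eqref{eq:main}, and on the other hand the mean-field lower bound $\theta(p)\ge c(p-p_c)$ for $p>p_c$. The main obstacle is the design of the randomized algorithm: unlike for Bernoulli percolation on a lattice, where each cell has a bounded number of neighbours, the state of a Voronoi cell depends on Poisson points in a potentially large region, so any naive exploration procedure has unbounded revealment. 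The technical heart of the argument is therefore to construct an annular querying strategy that is local enough to keep $\delta_b(T)$ proportional to $\Sigma_n/n$, yet determines $f$ with probability one.
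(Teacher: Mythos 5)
Your proposal follows the same OSSS-based strategy as the paper: discretize into small boxes, build an exploration algorithm $\mathsf T_k$ that reveals the cluster of $\mathsf S_k$, average over $k$, and feed the resulting revealment bound through OSSS and Russo to obtain the differential inequality $\theta_n' \gtrsim (n/\Sigma_n)\,\theta_n$, then integrate via the Duminil-Copin--Tassion lemma. The overall skeleton is correct and matches the paper closely.

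However, there is a gap at the step you yourself flag as the ``technical heart,'' and the reasoning you offer for it is not quite right. You claim the revealment bound holds ``since a box is queried by $T_k$ only if it lies in the cluster of $\mathsf S_k$.'' In the Voronoi setting this is false: a box $x$ can be queried even when $x$ is nowhere near a black point connected to $\mathsf S_k$, because to determine the colour of a point $y'$ on the exploration frontier one may have to reveal boxes far from $y'$ (whenever $\eta$ happens to be sparse near $y'$). The paper's Lemma~\ref{lem:11} handles exactly this: if $x$ is revealed, then some $y'\in \mathsf R_z^1$ is connected to $\mathsf S_k$ \emph{and} a large ball around $z$ is empty of $\eta^b$ (the decreasing event $E_z$). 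These two events have opposite monotonicity, so FKG decouples them, and the Poisson tail on $\bbP_p[E_z]$ together with a chain-of-boxes lower bound on $\bbP_p[x\leftrightarrow \mathsf S_k\,|\, \mathsf R_z^1\leftrightarrow\mathsf S_k]$ gives $\delta_x(\mathsf T_k)\le c_0\,\bbP_p[x\leftrightarrow\mathsf S_k]$ \emph{before} averaging over $k$. Your proposal asserts the averaged bound $\delta_b(T)\le C\Sigma_n/n$ directly, but without this FKG decoupling there is no route from ``annular layers plus Poisson tails'' to a revealment bound expressed in terms of connection probabilities; the two estimates live on different events and must be combined via FKG. Supplying that argument is the one substantive piece missing from your proof.
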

This result has an immediate corollary, namely the result of Bollob\'as and Riordan \cite{BolRio06} on planar Voronoi percolation.
\begin{corollary}\label{cor:main}
The critical parameter of Voronoi percolation on $\bbR^2$ is equal to $1/2$. Furthermore, $\theta(1/2)=0$.
\end{corollary}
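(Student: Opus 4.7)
The plan is to deduce the corollary from Theorem~\ref{thm:main} together with the color symmetry of planar Voronoi percolation at $p=1/2$. I would split the argument into two parts: a crossing argument to rule out $p_c>1/2$, and a Zhang-type coexistence argument to rule out $\theta(1/2)>0$, which in particular forces $p_c\geq 1/2$.

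In dimension $d=2$, at $p=1/2$ the two Poisson processes $\eta^b$ and $\eta^w$ have the same intensity, so exchanging them preserves the joint law. Consequently $\omega$ has the same distribution as $1-\omega$, and any event concerning the black phase has the same probability as the analogous event for the white phase.

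First, I would show $p_c\leq 1/2$. Suppose for contradiction that $p_c>1/2$. Then Theorem~\ref{thm:main} applied at $p=1/2$ gives exponential decay of $\theta_n(1/2)$, and by color symmetry the analogous white crossing probability decays exponentially as well. On the other hand, a standard planar topological argument shows that for any rectangle $R\subset\mathbb{R}^2$, either a path of black cells crosses $R$ horizontally or a path of white cells crosses $R$ vertically, so these two crossing probabilities sum to at least $1$. Taking $R$ to be a square of side $n$ and using color symmetry, one of the two crossing probabilities is at least $1/2$, which forces some deterministic point to be connected to distance of order $n$ with probability bounded below uniformly in $n$. This contradicts the exponential decay, and hence $p_c\leq 1/2$.

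Next, I would show $\theta(1/2)=0$; this also gives $p_c\geq 1/2$, since the mean-field lower bound of Theorem~\ref{thm:main} would otherwise imply $\theta(1/2)\geq c(1/2-p_c)>0$. Suppose for contradiction that $\theta(1/2)>0$. Then almost surely an infinite black cluster exists, and by color symmetry an infinite white cluster exists almost surely as well. A Burton--Keane argument, applicable to Voronoi percolation thanks to translation ergodicity and the finite-energy property of local modifications of $(\eta^b,\eta^w)$, gives almost sure uniqueness of each infinite cluster. Zhang's coexistence argument then uses FKG, translation invariance and color symmetry to force each of the four sides of a large box to be reached by both the black and the white infinite clusters with positive probability, which is topologically impossible in the plane. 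This contradiction yields $\theta(1/2)=0$ and $p_c\geq 1/2$, completing the proof.

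The main obstacle is the careful adaptation of these planar duality and coexistence arguments from Bernoulli lattice percolation to Voronoi percolation. The complementarity of black horizontal and white vertical crossings of a rectangle requires a topological setup that handles the random shapes of the Voronoi cells and boundary effects, and the finite-energy input to Burton--Keane must be formulated for modifications of a Poisson point process rather than single-edge flips. These adaptations are well-established in the planar Voronoi percolation literature (cf.\ \cite{BolRio06}), and they are the part of the argument requiring the most care.
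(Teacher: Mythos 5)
Your proposal is correct and shares most of the key ingredients with the paper's proof (self-dual crossings at $p=1/2$, FKG, the square-root trick, uniqueness of the infinite cluster, color symmetry), but the second half is organized somewhat differently. For $p_c\le 1/2$ both arguments are essentially the same: the crossing probability $\bbP_{1/2}[A_n]=1/2$ prevents $\theta_n(1/2)$ from decaying exponentially, whether you phrase it directly (as the paper does, via $\bbP_{1/2}[0\longleftrightarrow\mathsf S_n]\ge c/n$) or by contradiction with Theorem~\ref{thm:main} (as you do). For $\theta(1/2)=0$, you invoke the full Zhang coexistence argument, ending with the topological impossibility of simultaneous unique black and white infinite clusters; the paper instead runs only the black half of Zhang's argument: the square-root trick plus uniqueness shows that if $\theta(1/2)>0$ then $\bbP_{1/2}[A_n]\to 1$, which contradicts the exact identity $\bbP_{1/2}[A_n]=1/2$ already established. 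The paper's route is thus slightly more economical, closing the loop numerically against the self-duality identity rather than geometrically, and it does not need to discuss the white infinite cluster at all. Both routes rely on uniqueness of the infinite cluster for Voronoi percolation, which you propose to re-derive via Burton--Keane with a finite-energy argument for Poisson configurations (a genuine adaptation, as you note), whereas the paper simply cites \cite{BolRio06b}. Finally, your observation that $p_c\ge 1/2$ follows from $\theta(1/2)=0$ together with the mean-field lower bound in Theorem~\ref{thm:main} is a clean alternative to the paper's direct conclusion, and is correct.
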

Existing proofs of exponential decay for more standard models such as Bernoulli percolation \cite{Men86,AizBar87,DumTas15} or the Ising model \cite{AizBarFer87,DumTas15} do not extend to the context of Voronoi percolation. The reason is a lack of BK-type inequality.  In two dimensions, Bollob\`as and Riordan use crossing probabilities and introduce tools from Boolean functions \cite{friedgut1996every} to bypass this difficulty. This strategy was proved very fruitful in two dimension, since several results were proved for dependent percolation models using similar ideas; see e.g.~\cite{BefDum12,DumRaoTas16}. Unfortunately, applying such arguments in higher dimension seemed to be very challenging, so that even Bernoulli-type percolation models remained out of reach of the previous method. Recently, a new technique based on randomized algorithms was introduced to prove sharpness of the phase transition for the random-cluster and Potts models on transitive graphs \cite{DumRaoTas16a}. 
This method, based on an inequality connecting randomized algorithms and influences in a product space first proved in \cite{OSSS}, seems applicable to a variety of continuum models including Voronoi percolation or Boolean percolation \cite{DumRaoTas16c}.

The strategy consists in proving a family of differential inequalities. More precisely, fix $\delta>0$ such that $p_c\in(\delta,1-\delta)$. We will prove that there exists $c>0$ such that for all $n\geq 1$ and $p\in[\delta,1-\delta]$,
\begin{equation} \label{eq:mlem}
{\theta'_n(p)} ~\geq~ c \,  {\frac{n}{{S_n(p)}}} \, \theta_n(p),
\end{equation} 
where $S_n:= \sum_{k=0}^{n-1} \theta_k$.

The proof of Theorem~\ref{thm:main} follows from \eqref{eq:mlem} by applying the following lemma to $f_n=\theta_n /c$.   This lemma can be found in \cite{DumRaoTas16a}.
%The end of the proof does not refer to the model anymore. It consists in integrating the previous differential inequalities. For this step of the proof, we can simply refer to \cite[Lemma~??]{DumRaoTas16a} since, when applied to $f_n=\theta_n/c$, the statement implies Theorem~\ref{thm:main} readily.  
  \begin{lemma}\label{lem:technical}
Consider a converging sequence of increasing differentiable functions $f_n:[\alpha_0,\alpha_1]\longrightarrow [0,M]$ satisfying
 \begin{equation}\label{eq:technical}f_n'\ge \frac{n}{\Sigma_{n}}f_n\end{equation}
 for all $n\ge1$, where $\Sigma_n=\sum_{k=0}^{n-1}f_k$. Then, there exists $\beta \in[\alpha_0,\alpha_1]$ such that  
 \begin{itemize}
 \item For any $\beta<\beta_1$, there exists $c_\beta>0$ such that for any $n$ large enough,
 $f_n(\beta)\le M\exp(-c_\beta n).$
  \item For any $\beta>\beta_1$, $\displaystyle f=\lim_{n\rightarrow \infty}f_n$ satisfies $f(\beta)\ge \beta-\beta_1.$
 \end{itemize}
  \end{lemma}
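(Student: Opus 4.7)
The plan is to define $\beta_1$ via the summability of $\sum_n f_n(\beta)$ and then to integrate the differential inequality in complementary ways on the two sides of $\beta_1$. Explicitly, I set
\[
\beta_1 \;:=\; \sup\big\{\beta\in[\alpha_0,\alpha_1]:\;\Sigma_\infty(\beta)<\infty\big\},\qquad \Sigma_\infty(\beta):=\lim_{n\to\infty}\Sigma_n(\beta)\in[0,\infty].
\]
Since each $f_n$ is non-negative and increasing in $\beta$, $\Sigma_\infty$ is monotone, so this threshold is well defined. The subcritical regime $\beta<\beta_1$ is then essentially immediate: pick $\beta'\in(\beta,\beta_1)$ with $C:=\Sigma_\infty(\beta')<\infty$; then $\Sigma_n(t)\le C$ for all $n$ and all $t\in[\beta,\beta']$, and integrating the hypothesis $(\log f_n)'(t)\ge n/\Sigma_n(t)\ge n/C$ over $[\beta,\beta']$ combined with $f_n(\beta')\le M$ yields $f_n(\beta)\le M\exp(-c_\beta n)$ with $c_\beta=(\beta'-\beta)/C$.

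The supercritical statement $f(\beta)\ge \beta-\beta_1$ for $\beta>\beta_1$ I will prove in two steps. First, I would establish positivity $f(\beta)>0$ by contradiction: if $f(\beta)=0$, then by Ces\`aro $\Sigma_n(\beta)=o(n)$, and for any $t\in(\beta_1,\beta)$ the bound $\Sigma_n(s)\le\Sigma_n(\beta)$ on $s\in[t,\beta]$ combined with the differential inequality gives
\[
f_n(t)\;\le\;M\exp\!\Big(-\tfrac{n(\beta-t)}{\Sigma_n(\beta)}\Big),
\]
which decays super-exponentially in $n$, implying $\sum_n f_n(t)<\infty$ and contradicting $t>\beta_1$. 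Second, with positivity secured on $(\beta_1,\alpha_1]$, the bound $\log(1+x)\le x$ and summation by parts convert the hypothesis into
\[
\Sigma_n'(t)\;\ge\;\sum_{k=1}^{n-1}\frac{kf_k(t)}{\Sigma_k(t)}\;\ge\;\sum_{k=1}^{n-1}k\big(\log\Sigma_{k+1}(t)-\log\Sigma_k(t)\big)\;=\;\sum_{k=1}^{n-1}\log\frac{\Sigma_n(t)}{\Sigma_k(t)}.
\]
Integrating from $\beta_1$ to $\beta$ and dividing by $n$, the left-hand side tends to $f(\beta)-f(\beta_1)$. For each fixed $t>\beta_1$, the Ces\`aro convergence $\Sigma_k(t)/k\to f(t)>0$ together with Stirling's formula gives $\tfrac1n\sum_{k=1}^{n-1}\log(\Sigma_n(t)/\Sigma_k(t))\to 1$, and Fatou's lemma produces $f(\beta)-f(\beta_1)\ge\beta-\beta_1$, whence the claim.

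The main obstacle is the positivity step: it is the only place that genuinely leverages the divergence $\Sigma_\infty(t)=\infty$ above $\beta_1$, and the subsequent summation-by-parts bound is then essentially computational once the pointwise Ces\`aro limit $\Sigma_k(t)/k\to f(t)>0$ is available.
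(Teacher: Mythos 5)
Your proof is correct, but it follows a genuinely different route from the paper's. You define $\beta_1$ as the summability threshold $\sup\{\beta:\Sigma_\infty(\beta)<\infty\}$, whereas the paper sets $\beta_1:=\inf\{\beta:\limsup_n(\log\Sigma_n(\beta))/\log n\ge1\}$, and this shifts the work between the two regimes. Below $\beta_1$, your definition directly hands you a $\beta'>\beta$ with $\Sigma_\infty(\beta')<\infty$, so a single integration of $(\log f_n)'\ge n/\Sigma_n$ gives exponential decay at once; the paper's definition only yields $\Sigma_n(\beta)\le n^{1-\alpha}$ and therefore needs a two-stage bootstrap (first stretched-exponential decay, from which summability follows, then a second integration giving exponential decay). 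Above $\beta_1$ the trade reverses: the paper introduces the logarithmic Ces\`aro average $T_n=\tfrac{1}{\log n}\sum_{i=1}^n f_i/i$, derives $T_n'\ge(\log\Sigma_{n+1}-\log\Sigma_1)/\log n$, and integrates once, reading off the conclusion directly from $\limsup(\log\Sigma_n)/\log n\ge1$. Your argument instead needs two extra ingredients: a positivity step ($f(\beta)>0$ for $\beta>\beta_1$, proved by contradiction via $\Sigma_n(\beta)=o(n)$ forcing super-exponential decay and hence summability lower down), and the Abel-summation identity $\sum_{k<n}k(\log\Sigma_{k+1}-\log\Sigma_k)=\sum_{k<n}\log(\Sigma_n/\Sigma_k)$ followed by a Stirling/Fatou passage to the limit. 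Both are sound; the paper's supercritical half is shorter and slicker, while your version makes the underlying mechanism — linear growth of $\Sigma_n$ above a summability threshold — more explicit. Both proofs tacitly use $\Sigma_1=f_0>0$, which holds in the application since $f_0=\theta_0/c>0$.
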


The paper is organized as follows. The next section contains some preliminaries. In Section 3, we prove \eqref{eq:mlem}. Section 4 contains the proof of Corollary~\ref{cor:main}. For completeness, we include the proof of Lemma~\ref{lem:technical} in Section 5.
%\begin{itemize}[noitemsep,nolistsep]
%\item for $p<p_1$, there exists $c_p>0$ such that $f_n(p)\le \exp(-c_p n)$,
%\item for $p>p_1$, $\lim f_n(p)\ge p-p_1$.
%\end{itemize}

\section{Preliminaries}

\subsection{Monotone events and the  FKG inequality}
\label{sec:fkg-inequality}
An event $A$ is  said to be \emph{increasing} if for every configurations $(\eta^b,\eta^w)$, $(\bar\eta^b,\bar\eta^w)$,
\begin{equation*}
  \left.\begin{array}[c]{c}
    (\eta^b,\eta^w)\in A\\
    \eta^b\subset \bar\eta^b, \,  \eta^w\supset \bar\eta^w
  \end{array}
\right\}\implies   (\bar\eta^b,\bar\eta^w)\in A.
\end{equation*}
An event is said to be \emph{decreasing} if its complement is increasing. 
The FKG inequality for Voronoi percolation (see e.g.\@
 \cite{BolRio06b}) states that for any increasing events $A$ and $B$,
\begin{equation}
  \label{eq:5}
  \bbP_p[A\cap B]\ge\bbP_p[A]\bbP_p[B].\tag{FKG}
\end{equation}
Note that it implies that $\bbP_p[A\cap B]\le\bbP_p[A]\bbP_p[B]$ whenever $A$ is increasing and $B$ is decreasing.

\subsection{A Russo's type formula for Voronoi percolation}
\label{sec:russ-form-voron}

For an increasing event $A$, define the set of {\em pivotal points} 
$$\mathsf{Piv}_A := \left \lbrace x \in \eta: {\bf 1}_A ( \eta^b \setminus \{x\}, \eta^w \cup  \{x\})\neq  {\bf 1}_A ( \eta^b \cup  \{x\}, \eta^w \setminus \{x\}) \right \rbrace.$$
Call an increasing event $A$ {\em local} if there exists $n\ge0$ such that $A$ is measurable with respect to the $\sigma$-algebra generated by $\{\omega(x)\}_{x\in \mathsf B_n}$.
\begin{lemma}\label{Russo}
Consider a local increasing event $A$. Then, $p\mapsto \bbP_p [A]$ is differentiable and
\begin{align*}
 \frac{{\rm d}\bbP_p [A]}{{\rm d}p}  = \bbE_p [|\mathsf{Piv}_A|] .
\end{align*}
\end{lemma}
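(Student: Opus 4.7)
The plan is to derive the identity by coupling all the measures $(\bbP_p)_{p\in[0,1]}$ on a common probability space and then reducing the differentiation to a finite-dimensional Russo formula for independent Bernoulli variables. Let $\eta$ be a Poisson point process on $\bbR^d$ of intensity one and, conditionally on $\eta$, let $(\xi_x)_{x\in\eta}$ be i.i.d.\ Bernoulli$(p)$. By the coloring theorem, $\eta^b:=\{x\in\eta:\xi_x=1\}$ and $\eta^w:=\{x\in\eta:\xi_x=0\}$ are independent Poisson point processes of intensities $p$ and $1-p$, and this realizes $\bbP_p$ simultaneously for all $p\in[0,1]$. The virtue of this construction is that one can now differentiate in $p$ while keeping $\eta$ fixed, reducing everything to a computation involving finitely many Bernoulli variables.

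Introduce the (almost surely finite) random set
\begin{equation*}
\eta_{\mathsf B_n}:=\{x\in\eta:C(x)\cap\mathsf B_n\neq\emptyset\}
\end{equation*}
and write $N:=|\eta_{\mathsf B_n}|$. Since $A$ is local, once $\eta$ is fixed $\mathbf 1_A$ depends only on the colors $(\xi_x)_{x\in\eta_{\mathsf B_n}}$; in particular $\mathsf{Piv}_A\subset\eta_{\mathsf B_n}$ and $p\mapsto\bbE_p[\mathbf 1_A\mid\eta]$ is a polynomial of degree at most $N$ in $p$. The classical Russo formula for a finite product of Bernoullis applied conditionally on $\eta$ then yields
\begin{equation*}
\frac{d}{dp}\bbE_p[\mathbf 1_A\mid\eta]=\sum_{x\in\eta_{\mathsf B_n}}\bbP_p[x\in\mathsf{Piv}_A\mid\eta]=\bbE_p[|\mathsf{Piv}_A|\mid\eta],
\end{equation*}
so the lemma will follow provided the derivative can be pulled outside the outer expectation in $\eta$.

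This last interchange is the only real technical point, and it reduces to the integrability $\bbE[N]<\infty$, since the conditional derivative above is dominated by $N$ uniformly in $p\in[0,1]$. For this I would use the Mecke formula to write
\begin{equation*}
\bbE[N]=\int_{\bbR^d}\bbP\bigl[C(x)\cap\mathsf B_n\neq\emptyset\text{ in }\eta\cup\{x\}\bigr]\,dx,
\end{equation*}
and then estimate: if $x$ lies at distance $r$ from $\mathsf B_n$, the cell $C(x)$ reaches $\mathsf B_n$ only if some point of $\mathsf B_n$ has no point of $\eta$ within distance of order $r$, an event of probability at most $\exp(-cr^d)$; the integral over $\bbR^d$ therefore converges. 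Given $\bbE[N]<\infty$, combining the fundamental theorem of calculus with Fubini and dominated convergence yields $\bbP_p[A]-\bbP_{p_0}[A]=\int_{p_0}^p\bbE_q[|\mathsf{Piv}_A|]\,dq$, and the continuity of $q\mapsto\bbE_q[|\mathsf{Piv}_A|]$ (a further dominated-convergence argument with the same dominant $N$) completes the proof. The hard part is thus purely a Poisson integrability estimate; once the coupling is in place, the algebra of Russo's formula is finite-dimensional.
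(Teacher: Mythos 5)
Your proof is correct and takes essentially the same route as the paper: both condition on the unlabelled process $\eta$, note that the conditional coloring is i.i.d.\ Bernoulli on the (almost surely finite) set of points whose cells meet $\mathsf B_n$, apply the finite-dimensional Russo formula conditionally, and then control the interchange of derivative and outer expectation via the integrability of that random set. The only cosmetic differences are that the paper writes the increment $\bbP_{p+\delta}[A]-\bbP_p[A]$ as a double integral and uses Fubini plus continuity of $s\mapsto\bbE_s[|\mathsf{Piv}_A|]$, whereas you make the coupling explicit via the coloring theorem and invoke dominated convergence directly, and you reach the integrability estimate via Mecke's formula rather than the paper's direct Poisson tail bounds on $|D_n(\eta)|$; these are equivalent in substance.
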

Note that even though the event $A$ may depend only on the colors of the points in $\mathsf B_n$, the set $\mathsf{Piv}_A$ can a priori contain points outside the ball. Nonetheless, it is simple to check that $|\mathsf{Piv}_A|$ is integrable. Indeed, we have
\begin{equation}
  \label{eq:6}
  |\mathsf{Piv}_A|\le |D_n(\eta)| 
\end{equation}
where $D_n(\eta)$ is the set of points in $\eta$, whose cells intersect the ball $\mathsf B_n$. The integrability of $D_n$ follows from standard estimates of the Poisson-Voronoi tessellation. For example, observe that there exists $c>0$ such that for every $t\ge n$, $\bbP_p[D_n \cap\mathsf (\bbR^d\setminus B_{4t})\neq\emptyset]\le \bbP_p[D_n \cap B_{t} =\emptyset]\le e^{-ct^d}$ and $\bbP_p[|D_n| \cap B_{4t}\geq t^{d+1}]\le \bbP_p[\eta \cap B_{4t}\geq t^{d+1}] \le e^{-ct}$.

 % for any $t>4n$, if $\mathsf{Piv}_A$ intersects the outside of $\mathsf B_{4t}$, then automatically $\eta\cap\mathsf B_t= \emptyset$ since otherwise colors in $\mathsf B_n$ would depend on points of $\eta$ in $\mathsf B_{4t}$ only. We deduce that there exists $c>0$ such that 
% \begin{equation}\label{eq:13} \bbP_p[\mathsf{Piv}_A\cap(\bbR^d\setminus \mathsf B_{4t})\ne\emptyset]\le \exp(-ct^d).\end{equation}
% Since $\bbP_p[|\eta\cap\mathsf B_t|>t^{d+1}]\le \exp(-ct)$, the integrability of $|\mathsf{Piv}_A|$ follows readily.
\begin{proof}[Proof of Lemma~\ref{Russo}] 
In this proof, $d\eta$ denotes the law of $\eta$ (in particular it does not contain information on colors). Write
$$ \bbP_{p+\delta} [A] - \bbP_{p} [A] = \int_{\eta} \bbP_{p+\delta}[A \,|\, \eta] -\bbP_p[A \, | \, \eta] \, \, d\eta.$$
Condition on $\eta$, the law of $\eta^b$ is Bernoulli percolation with parameter $p$ on points of $\eta$. %That is, given $\eta$, let  $\{u(x)\}_{x \in \eta}$ be a family of independent Bernoulli parameter $p$ random variables, and for any $x \in \eta$, let $x \in \eta^b$ if $u(x)=1$ and $x\in \eta^w$ if $u(x)=0$. 
Since $A$ is measurable with respect to the $\sigma$-algebra generated by $\{\omega(x)\}_{x \in \mathsf B_n}$, apply Russo's formula (for Bernoulli percolation) and Fubini to get
\begin{align*}
\bbP_{p+\delta} \big[A \big] - \bbP_{p} \big[A\big] &= \int_{\eta} \Big(\int_{p\leq s \leq p+\delta} \bbE_{s} [|\mathsf{Piv}_A|~| \eta ]ds\Big)\,d\eta \\
&= \int_{p\leq s \leq p+\delta} \Big(\int_{\eta}  \bbE_{s} [|\mathsf{Piv}_A|~| \eta ]d\eta\Big)\,ds\\
&= \int_{p\leq s \leq p+\delta} \bbE_{s} [|\mathsf{Piv}_A|]\,ds.
\end{align*}
The proof follows by continuity in $s$ of $\bbE_{s} [|\mathsf{Piv}_A|]$,which is direct consequence of the domination~\eqref{eq:6}.
\end{proof}
%When there is no ambiguity, $\mathsf {Piv}_A$ denotes $\mathsf {Piv}_A(p)$.
%Let $A$ be an increasing event which is measurable with respect to sigma algebra generated by $\{\omega(x)\}_{x \in \mathsf B_0(n)}$ for some finite $n$.

\subsection{The OSSS inequality}
\label{sec:osss-ineq-bool}

Assume $ I$ is a countable set, and let $(\Omega^I, \pi^{\otimes I})$ be a product probability space, and $f: \Omega^I \rightarrow \{0,1\}$.
An {\em algorithm $\mathsf T$} determining $f$ takes a configuration $\omega=(\omega_i)_{i\in I}\in\Omega^I$ as an input, and reveals the value of $\omega$ in different edges one by one. 
At each step, which coordinate will be revealed next depends on the values of $\omega$ revealed so far. The algorithm stops as soon as the value of $f$ is the same no matter the values of $\omega$ on the remaining coordinates. 
Here, we always assume that the algorithm stops in finite time almost surely.
%We refer to \cite{OSSS} for a formal definition.
We will use the following inequality. For any function $f: \Omega^I \rightarrow \{0,1\}$, and any algorithm $\mathsf T$ determining $f$, 
\begin{equation} \label{eq:OSSS}\tag{OSSS}
\var (f) \leq \sum_{i\in I} \delta_{i}(\mathsf T) \, \infl_{i}(f),
\end{equation}
where $\delta_i(\mathsf T)$ and $\infl_{i}(f)$ are respectively the {\em revealment} and the {\em influence} of the $i$-th coordinate defined by
\begin{align*}\delta_i(\mathsf T) &:=  \pi^{\otimes I}[\text{$\mathsf T$ reveals the value of } \omega_i],\\
\infl_i(f) &:= \pi^{\otimes I} \left[\,  f (\omega) \neq f(\tilde\omega) \,\right].\end{align*}
Above, $\tilde\omega$ denotes the random element in $\Omega^I$ which is the same as $\omega$ in every coordinate except the $i$-th coordinate which is \emph{resampled independently}. 

\begin{remark}
  The \eqref{eq:OSSS} inequality is originally stated for the case when the sets $\Omega$ and $I$ are finite. However, the proof of \cite{OSSS} carries on for the case where $(\Omega,\pi)$ a general probability space and $I$ infinite without any need for modification. The reader could also consult \cite[Theorem 2.5]{DumRaoTas16a}.
\end{remark}

\subsection{Tensorization of Voronoi percolation}

We will eventually apply \eqref{eq:OSSS}. %To do so, we first exhibit algorithms determining $0\longleftrightarrow\partial\mathsf B_0(n)$ with small revealment. This enables us to show that the variance is much smaller than the sum of influences. Then, we invoke a Russo-type inequality relating the sum of influences to the derivative of $\theta_n$. Altogether, these two steps lead to \eqref{eq:mlem}.
In order to do so, we introduce a suitable finite product space to encode the measure of Voronoi percolation.% (tensorization).% and then we focus on  events measurable with respect to the process in a bounded domain $\mathsf B_0(m)$ (finite-volume approximation).

Fix $\varepsilon >0$. For $x\in \varepsilon \mathbb Z^d$, introduce the box
$\mathsf R_{x}^\varepsilon:= x+[0,\varepsilon)^d$ as well as $\eta_x^b = \eta^b \cap \mathsf R_x^{\varepsilon}$, $\eta_x^w = \eta^{w} \cap \mathsf R_x^{\varepsilon}$ and $\eta_x = \eta_x^w \cup \eta_x^b$. 
Let $(\Omega_x, \pi_x)$ be the measured space associated to the random variable $\eta_x=(\eta^b_x, \eta^w_x)$, and consider the product space $(\prod_{x\in \eps \mathbb Z^d} \Omega_x, \bigotimes_{x\in \eps \mathbb Z^d} \pi_x)$. 
Since the random variables $(\eta^b_x, \eta^w_x)$ are independent for different $x$,  this space is in direct correspondence with the original space on which Voronoi percolation was defined. 

For $x\in\eps\mathbb Z^d$ and an increasing event $A$, define
\begin{equation}
  \label{eq:1}
   \infl_x^\eps[A]:=\mathbb P_p[\mathbf 1_A(\eta)\neq\mathbf 1_A (\tilde\eta)],
\end{equation}
where $\eta=(\eta_z)_{z\in\eps\mathbb Z^d}$ has law $\bigotimes_{z\in \eps \mathbb Z^d} \pi_z$ and  $\tilde\eta$ is equal to $\eta$ except on the $x$-coordinate which is resampled independently. Here and below, we use a slight abuse of notation by denoting the measure on the probability space in which $\eta$ and $\tilde\eta$ are defined by $\bbP_p$.
%Using the derivative formula of Lemma~\ref{Russo}, the influences introduced above can be related to the derivative of $\theta_n$
\begin{lemma}
  \label{lem:RussoInfl} For a local  increasing event $A$,
    \begin{equation}
    \label{eq:2}
      \frac{{\rm d}\bbP_p [A]}{{\rm d}p}  \ge \frac12 \, \limsup_{\ep\to 0}\sum_{x\in \ep \mathbb Z^d}  \infl_x^\eps[A].
  \end{equation}
\end{lemma}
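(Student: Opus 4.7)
\emph{Proof plan.} My goal is to bound $\sum_{x \in \eps\bbZ^d} \infl_x^\eps[A]$ from above by $2\,\bbE_p[|\mathsf{Piv}_A|] + o(1)$ as $\eps \to 0$, and then conclude via Lemma~\ref{Russo}.

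First, I would rewrite each influence by conditioning on the configuration $\eta^{(x)} := (\eta_z)_{z \ne x}$ outside the $x$-box. Given $\eta^{(x)}$, $\eta_x$ and $\tilde\eta_x$ are i.i.d., so with $q_x := \bbP_p[A \mid \eta^{(x)}]$ one gets $\infl_x^\eps[A] = 2\,\bbE[q_x(1-q_x)]$. Using the elementary inequality $\mathrm{Var}(X) \le \bbE[(X-c)^2]$ with $X = \mathbf 1_A(\eta)$ and constant $c = \mathbf 1_A(\eta^{(x)}, \emptyset)$ (the value of $A$ after emptying the box $\mathsf R_x^\eps$), this gives
\[
\infl_x^\eps[A] \;\le\; 2\,\bbP_p[R_x = 1], \qquad R_x := \mathbf 1\big[\mathbf 1_A(\eta) \ne \mathbf 1_A(\eta^{(x)}, \emptyset)\big].
\]

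Next, I would bound $R_x$ pointwise by decomposing on $|\eta_x|$. When $|\eta_x|=0$, $R_x = 0$. When $\eta_x = \{y\}$, $R_x = 1$ says exactly that $y$ is \emph{remove-pivotal}, and a short monotonicity argument shows this implies color-pivotality, i.e.\ $y \in \mathsf{Piv}_A$: if for instance $y$ is black with $\mathbf 1_A(\eta)=1$ and $\mathbf 1_A(\eta \setminus \{y\})=0$, then the configuration $\eta^y$ obtained by recoloring $y$ in white satisfies $\eta^y \le \eta \setminus \{y\}$ in the partial order of Section~\ref{sec:fkg-inequality} (more white, same black), so monotonicity yields $\mathbf 1_A(\eta^y) \le 0 \ne 1 = \mathbf 1_A(\eta)$; the white case is analogous. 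When $|\eta_x| \ge 2$, I bound $R_x \le \mathbf 1[|\eta_x| \ge 2]$. Since distinct color-pivotal points lie in distinct $\eps$-boxes,
\[
\sum_{x \in \eps\bbZ^d} R_x \;\le\; |\mathsf{Piv}_A| + \sum_{x \in \eps\bbZ^d} \mathbf 1\big[|\eta_x| \ge 2,\, R_x = 1\big].
\]

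Finally, I would show the error vanishes in expectation. Each box satisfies $\bbP_p[|\eta_x| \ge 2] = O(\eps^{2d})$; moreover $\{R_x = 1\}$ forces $\eta_x$ to contain a point whose Voronoi cell (before or after the removal) reaches $\mathsf B_n$, so the same Poisson-Voronoi tail bounds used to justify \eqref{eq:6} restrict attention to essentially $O(n^d/\eps^d)$ relevant boxes. Combining these two estimates yields $\bbE_p\big[\sum_x \mathbf 1[|\eta_x| \ge 2, R_x = 1]\big] = O(n^d \eps^d) \to 0$. Putting everything together with Lemma~\ref{Russo},
\[
\sum_{x \in \eps\bbZ^d} \infl_x^\eps[A] \;\le\; 2\,\bbE_p[|\mathsf{Piv}_A|] + o(1) \;=\; 2\,\frac{{\rm d}\bbP_p[A]}{{\rm d}p} + o(1),
\]
and taking $\limsup_{\eps \to 0}$ and dividing by $2$ gives the claim. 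The \emph{main obstacle} is this last step: although each individual box contributes only $O(\eps^{2d})$ to the multi-point error, the sum is over infinitely many boxes in $\eps\bbZ^d$, so one must carefully use the locality of $A$ and quantitative Voronoi geometry to reduce to an effective family of $O(n^d/\eps^d)$ boxes.
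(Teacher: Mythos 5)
Your proof is correct and follows essentially the same strategy as the paper: reduce each influence to the probability of having a pivotal point in the corresponding $\eps$-box up to an $O(\eps^{2d})$ error, sum to recover $\bbE_p[|\mathsf{Piv}_A|]$, control the error via Poisson--Voronoi tail bounds, and close with Lemma~\ref{Russo}. The monotonicity argument showing that a remove-pivotal point is color-pivotal is exactly the paper's observation.

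Two organizational differences are worth noting. First, your variance inequality $\var(\mathbf 1_A \mid \eta^{(x)}) \le \bbE[(\mathbf 1_A(\eta)-\mathbf 1_A(\eta^{(x)},\emptyset))^2 \mid \eta^{(x)}]$ cleanly gives $\infl_x^\eps[A]\le 2\,\bbP_p[R_x=1]$ in one line, whereas the paper uses the symmetry between $\eta_x$ and $\tilde\eta_x$ to rewrite the influence as $2\,\bbP_p[\mathbf 1_A(\eta)\neq\mathbf 1_A(\tilde\eta),\,|\eta_x|=1,\,|\tilde\eta_x|=0]+O(\eps^{2d})$; both yield the same reduction to pivotality but your $L^2$ comparison to the fixed configuration $(\eta^{(x)},\emptyset)$ is a slightly slicker route. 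Second, the paper controls the infinite sum by first establishing \eqref{eq:4} on $\mathsf B_m$ for fixed $m$, taking $\eps\to0$, and then letting $m\to\infty$ using the uniform decay \eqref{eq:3}; you instead absorb the far boxes directly into a single error term. Your error estimate is correct in spirit but stated too loosely: to get $\sum_{x}\bbP_p[|\eta_x|\ge 2,\,R_x=1]\to 0$ you must observe that, for $\|x\|\ge 4n$ and $\eps$ small, the events $\{|\eta_x|\ge 2\}$ and $\{\eta\cap\mathsf B_{\|x\|/4}=\emptyset\}$ (the latter implied by $R_x=1$) live on disjoint regions, hence $\bbP_p[|\eta_x|\ge 2,\,R_x=1]\le C\eps^{2d}e^{-c\|x\|^d}$; summing then gives $O(\eps^d)$ for the far boxes and $O(n^d\eps^d)$ for the near ones. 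Saying the tail bounds ``restrict attention to $O(n^d/\eps^d)$ boxes'' glosses over this independence step, which is the one piece that needs to be made explicit.
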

\begin{proof}
Assume $A$ depends on the colors in $\mathsf B_n$ only. Let us start by proving that for any $m\ge 1$,
\begin{equation}
 \frac{{\rm d}\bbP_p [A]}{{\rm d}p} \ge\frac12 \, \limsup_{\ep\to 0}\sum_{x\in \eps \bbZ^d \cap \mathsf B_m } \infl_x^\eps[A].\label{eq:4}
\end{equation}
 % First, apply Lemma~\ref{Russo} to the event $A=0\longleftrightarrow\partial \mathsf B_0(n)$ to get
 %  \begin{equation*}
 %    \theta_n'(p)=\bbE_p[|\mathsf{Piv}_A|]\ge \sum_{\substack{z\in\eps\mathbb Z^d\\|z|\le m}}\bbE_p[|\mathsf{Piv}_A\cap \mathsf S_z^\eps|]. 
 %  \end{equation*}
  % For every $z$, consider a uniform random variable $U_z$ in $S_z^\eps$  independent of $\eta$. Since $\bbP[U_z\in \mathsf{Piv}_A]=\bbE_p[|\mathsf{Piv}_A\cap \mathsf S_z^\eps|]/|\mathsf S^\eps_z|$, we have the following expression for the derivative of $\theta_n$: 
  % \begin{equation*}
  %   \theta_n'(p)=\eps^d \sum_{z\in \eps\mathbb Z^d}  \mathbb P[U_z\in \mathsf{Piv}_A]. 
  % \end{equation*}
  % Let us now focus on the right hand side of \eqref{eq:4}, and relate the influence of $z$ with the summand in the  expression above.
Fix $x\in \eps\mathbb Z^d\cap \mathsf B_m$ and use the notation for $\eta$ and $\tilde\eta$ introduced above. %Let $\eta=(\eta_t)_{t\in\eps\mathbb Z^d}$ be a random variable with law $\prod_{t\in \mathbb Z^d}\pi_t$ and let $\tilde\eta$ be equal to $\eta$ except on the $z$ coordinate where $\eta_z=(\eta_z^b,\eta_z^w)$ is replaced by $\tilde\eta_z=(\tilde \eta_z^b,\tilde\eta_z^w)$ with law $\mathbb P_z$ independent of $\eta$.
Observe that with probability $1-O(\varepsilon^{2d})$, there $\eta_x\cup \tilde\eta_x$ contains at most one point. Then, using that  $\eta=\tilde\eta$ when $\eta_x=\tilde\eta_x=\emptyset$ and that $\eta_x$ and $\tilde\eta_x$ play symmetric roles, we obtain that
  \begin{align*}
     \infl_x^\eps[A]&= 2\bbP_p[\mathbf 1_A(\eta)\neq \mathbf 1_A (\tilde\eta),|\eta_x|=1,|\tilde\eta_x|=0] +O(\eps^{2d}).
  \end{align*}
Under the condition that $|\eta_x|=1$ and $|\tilde\eta_x|=0$, the configuration $\tilde\eta$ is simply obtained from $\eta$ by removing the only point $x$ of $\eta$ in $\mathsf R_x^\eps$. Furthermore, by monotonicity, this point $x$ must be pivotal in $\eta$  when $\mathbf 1_A(\eta)\neq \mathbf 1_A (\tilde\eta)$. Hence, writing $\mathsf{Piv}_A$ for the pivotal set corresponding to $\eta$, the equation above implies
\begin{align*}
   \infl_x^\eps[A]&\le 2  \bbP_p[ |\mathsf{Piv}_A\cap \mathsf R_x^\eps|\ge 1, |\eta_x|=1,|\tilde\eta_x|=0]+O(\eps^{2d})\\
&\le  2  \bbE_p[| \mathsf{Piv}_A\cap \mathsf R_x^\eps| ]+O(\eps^{2d}).
\end{align*}
Summing this equation over the points $x\in\eps\bbZ^d\cap\mathsf B_m$ gives
 \begin{equation*}
   \sum_{x\in \eps \bbZ^d \cap \mathsf B_m}  \infl_x^\eps[A]\le 2 \bbE_p[|\mathsf{Piv}_A|]+ O(\eps^d).
 \end{equation*}
Eq. \eqref{eq:4} follows by taking the $\limsup$ and using the derivative formula of Lemma~\ref{Russo}.

Obtaining \eqref{eq:2} from \eqref{eq:4} follows readily from the existence of $c>0$ such that
\begin{equation}
  \label{eq:3}
   \infl_x^\eps[A]\le 2\eps^d  \exp(-c|x|^d)
\end{equation}
uniformly in $\eps$ and $x\in \eps\bbZ^d$ with $\|x\|\ge{4n}$.
To see this, assume that the value of $\mathbf 1_{A}$ is changed when $\eta$ is replaced by $ \tilde\eta$. Then, $\eta\cup\tilde\eta$ must have at least one points in $\mathsf R_x^\eps$ (which occurs with probability smaller than $2\eps^d$), and the cell of one of these points must intersect $\mathsf B_{\|x\|/4}$ (and therefore  $\mathsf B_{\|x\|/4}$ cannot contain a point of $\eta$).
\end{proof}
\section{Proof of Theorem~\ref{thm:main}}\label{sec:3}

As mentioned in the introduction, we only need to prove \eqref{eq:mlem}. For this, we fix $\delta>0$ with $p_c\in(\delta,1-\delta)$. Fix $n>0$ and $p\in[\delta,1-\delta]$. Below, constants $c_i$ ($i\le 4$) are positive and depend on $\delta$ and $d$ only. In particular, these constants are independent of $n$ and $p$.

For $\eps\in(0,1)$, consider the product space $(\prod_{x\in \eps \mathbb Z^d} \Omega_x, \bigotimes_{x\in \eps \mathbb Z^d} \pi_x)$ 
 introduced in Section 2.4. Applying \eqref{eq:OSSS} to $f=\mathbf 1_{0\longleftrightarrow\mathsf S_n}$ and an algorithm $\mathsf T_k$ determining $f$ gives that
\begin{equation}\label{eq:154}\theta_n(p)(1-\theta_n(p))~\le~\sum_{x\in \varepsilon\bbZ^d} \delta_x(\mathsf T_k) \infl_{x}^{\,\varepsilon}[0 \longleftrightarrow\mathsf S_n].\end{equation}

%(Note that $\{ 0 \longleftrightarrow \mathsf S_n\}$ depends only on finitely many coordinates $\eta_x$ of the product space so that the use of \eqref{eq:OSSS} is fully justified.)
The algorithm $\mathsf T_k$ will be provided by the following  lemma, whose proof is postponed to the end of this section.\begin{lemma}\label{lem:11}
There exists $c_0>0$ such that for any $k\in\llbracket 1,n\rrbracket$, there exists an algorithm $\mathsf T_k$ determining $\mathbf 1_{0 \longleftrightarrow\mathsf S_n}$ with the property that 
\begin{equation}\label{eq:177}
\delta_x (\mathsf T_k) \leq c_0 \, \bbP_p[  x  \longleftrightarrow  \mathsf S_k].
\end{equation}
\end{lemma}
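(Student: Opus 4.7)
The plan is to take $\mathsf T_k$ to be a breadth-first exploration of the black cluster $\calK$ of $\mathsf S_k$ --- the union of black Voronoi cells connected to $\mathsf S_k$ by a black path --- and to bound the revealment by a modification argument comparing ``$x$ is reached by the exploration'' to ``$x$ is connected to $\mathsf S_k$''.

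The algorithm proceeds as follows. Initially, one reveals boxes $\mathsf R_x^\eps$ in a progressively growing neighborhood of $\mathsf S_k$ until every Voronoi cell meeting $\mathsf S_k$ is \emph{certified}: for each such cell $C(z)$, enough of $\eta$ has been revealed that no unrevealed box could contribute a point closer to any probe point than $z$. Enqueue these cells. At each subsequent step, pop a cell $C(z)$ from the queue; if it is black and reachable through already-known black cells from $\mathsf S_k$, add it to $\calK$ and reveal enough further boxes to certify every Voronoi cell sharing a face with $C(z)$, enqueuing those in turn; if it is white, record it as an outer boundary cell and stop expanding in that direction. Halt when the queue is empty; by the tail bounds stated after \eqref{eq:6}, the exploration touches only finitely many points of $\eta$ almost surely, so the algorithm terminates.

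For correctness, $k \geq 1$ gives $0 \in \mathsf B_k$, so every continuous black path from $0$ to $\mathsf S_n$ crosses $\mathsf S_k$; hence
\[
\{0 \longleftrightarrow \mathsf S_n\} \;=\; \{0 \in \calK\} \cap \{\calK \cap \mathsf S_n \neq \emptyset\},
\]
and both events on the right are determined by the final state of the exploration. For the revealment bound, a box $\mathsf R_x^\eps$ is revealed only when the certification of some Voronoi cell lying in $\calK$, or adjacent to $\calK$, requires querying it; this forces the existence of $y \in \eta$ close to $x$ on the local Poisson scale whose cell lies in $\calK$ or shares a face with a cell of $\calK$. A standard modification argument --- condition on $\eta$ outside a small ball around $x$, then replace what lies inside by a single black point at $x$, which costs only a multiplicative constant in probability --- transfers the connection from $y$ to $x$, yielding $\delta_x(\mathsf T_k) \leq c_0 \, \bbP_p[x \longleftrightarrow \mathsf S_k]$.

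The main obstacle is controlling the potentially unbounded radius of Voronoi cells: the phrase ``close to $x$'' in the revealment argument must be made uniform in $\eps$ and must tolerate the rare event that the cell of $y$ is atypically large, since then certifying $C(y)$ forces revealing boxes far from both $y$ and $x$. This is handled by the Poisson-Voronoi tail estimates of type $e^{-ct^d}$ already used to prove integrability of $|D_n(\eta)|$: they keep both the length of the exploration around each cell and the resulting comparison constant uniform in $\eps$, so that the modification argument is not spoiled by large-cell events.
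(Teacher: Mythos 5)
Your algorithm is essentially the paper's: a breadth-first exploration of the black cluster of $\mathsf S_k$ via a local \textsf{Discover} routine, and your correctness argument (every path from $0$ to $\mathsf S_n$ crosses $\mathsf S_k$) is the same and is correct. You also correctly identify the main obstacle --- unbounded Voronoi cells can force the exploration to reveal boxes far from the cluster. However, the revealment bound has a genuine gap precisely at that obstacle.

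The problem is the claim that replacing the contents of ``a small ball around $x$'' with a single black point ``costs only a multiplicative constant'' and ``transfers the connection from $y$ to $x$.'' When $x$ is revealed while certifying a cell whose site $y$ is connected to $\mathsf S_k$, the distance $\|x-y\|$ is controlled only on the exponential tail, and when it is large, a local surgery at $x$ does not connect $x$ to $\mathsf S_k$: you would need to transport the connection across the entire distance $\|x-y\|$, and the cost of forcing a chain of black cells of that length is of order $e^{-c\|x-y\|}$, not $O(1)$. So the constant you want is \emph{not} uniform over the distance to the discovered cluster point; the large-cell tail estimate does not by itself ``keep the comparison constant uniform.'' What actually closes the argument (and what the paper does) is a quantitative tradeoff: decompose the revealment according to the unit box $\mathsf R_z^1$ containing the connected point, use FKG to decouple the increasing event $\{\mathsf R_z^1\longleftrightarrow \mathsf S_k\}$ from the decreasing event $E_z$ that forces $x$ to be revealed, bound $\bbP_p[E_z]\le c^{-1}e^{-c\|z-x\|^d}$ by the Poisson tail, and compare $\bbP_p[\mathsf R_z^1\longleftrightarrow\mathsf S_k]\le e^{c_4\|z-x\|}\,\bbP_p[x\longleftrightarrow\mathsf S_k]$ via a chain of all-black unit boxes. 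The sum $\sum_z e^{c_4\|z-x\|}\,e^{-c_2\|z-x\|^d}$ converges because $\|z-x\|^d$ beats $\|z-x\|$ for $d\ge 2$, giving a finite $c_0$. Your write-up never performs this balance, so the exponential-in-distance cost is left unaccounted for; that step needs to be added before the lemma is proved.
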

 Now,  using \eqref{eq:177} in \eqref{eq:154} gives \begin{equation}\label{eq:166}
\theta_n(p)~\le~c_0c_1\sum_{x \in \varepsilon\bbZ^d} \bbP_p[  x\longleftrightarrow\mathsf S_k]\,{\infl}^{\, \varepsilon}_{x}[0\longleftrightarrow \mathsf S_n]
\end{equation}
where $c_1:=(1-\theta_1(1-\delta))^{-1}$. Averaging \eqref{eq:166} over $1\le k\le n$  gives
\begin{equation*}
\theta_n(p)~\le~\frac{c_0c_1}n\sum_{x \in \varepsilon\bbZ^d}  \Big(\sum_{k=1}^n\, \bbP_p[  x\longleftrightarrow\mathsf S_k]\Big)\, {\infl}^{\, \varepsilon}_{x}[0\longleftrightarrow\mathsf S_n].
\end{equation*}
%{\substack { x\in \bbZ^d \\ \| \varepsilon x\|_{\infty} \leq m}}
A simple geometric observation using the invariance under translation of Voronoi percolation implies that 
$$\sum_{k=1}^n\, \bbP_p[  x \longleftrightarrow  \mathsf S_k]~\le~ \sum_{k=1}^n\, \theta_{d(x,\mathsf S_k)}(p)~\le~ 2S_n(p)$$
(above, $d(x,\mathsf S_k)$ denotes the distance between $x$ and $\mathsf S_k$) so that
\begin{equation*}
\theta_n(p)~\le~2c_0c_1 \frac{S_n(p)}n\sum_{x \in \varepsilon\bbZ^d} {\infl}^{\, \varepsilon}_{x}[0\longleftrightarrow \mathsf S_n].
\end{equation*}
Lemma~\ref{lem:RussoInfl} implies \eqref{eq:mlem} by letting $\varepsilon$ tend to 0.
Overall, the proof of the theorem boils down to the proof of Lemma~\ref{lem:11}.
\begin{proof}[Proof of Lemma~\ref{lem:11}]
Fix $k\in\llbracket 1,n\rrbracket$. 
%The proof goes in two steps. First, we construct the algorithm, then we check that it satisfies \eqref{eq:desrev}.
We start by defining the algorithm. 

For each $y\in \varepsilon\bbZ^d$, define an auxiliary algorithm $\mathsf {Discover}(y)$ revealing the random variables $\eta_x$ around the point $y$ until the color of each point in $\mathsf R_y^{\varepsilon}$ is determined. 
More formally, set $s=0$. 
When $s=t$, if the color of all the points inside the box $\mathsf R_y^{\varepsilon}$ is determined by all the revealed coordinates so far, the algorithm stops and returns the colors of points as the output. 
If not, the algorithm reveals the value of  $\eta_x$ for $x\in \eps\bbZ^d$ satisfying $\|x-y\|\le t$ and sets $s=t+1$. We write $x \in D(y)$ if $x$ is revealed by $\mathsf {Discover}(y)$.
We are now in a position to define the algorithm $\mathsf T_k$.

\begin{definition}
Set $X_0= \emptyset$ and $Z_0 =\mathsf S_k$. At step $t$, assume that $X_t \subset \bbZ^d$ and $Z_t \subset \bbR^d$ have been constructed.
If there is no $y \in \varepsilon\bbZ^d \setminus X_t$ with $\mathsf R_y^\varepsilon \cap Z_t \neq \emptyset$, the algorithm stops.
If such a $y$ exists (if more than one exists, pick the smallest for an ordering of $\varepsilon\bbZ^d$ fixed before running the algorithm), then the algorithm does the following:
\begin{itemize}[noitemsep,nolistsep]
\item[-] $\mathsf{Discover}(y)$.
\item[-] Set $X_{t+1}=X_t\cup \{ y \}$.
%Radially reveals the values of boxes around $x$, until the color of all the points inside the box $B_x$ is determined. That is, first reveals the value of co-ordinate $x$, then reveals the neighbors of $x$, then reveals vertices distance $2$ to $x$, and continues until the percolation configuration inside the box $B_x$ is fully determined.
\item[-] Set $Z_{t+1} = Z_t \cup \{ \text{all the black points in $\omega\cap\mathsf S_y^\eps$} \}$.
\end{itemize}
\end{definition}

Note that this algorithm discovers the connected component of $\mathsf S_k$ in $\omega$. In particular, it clearly determines ${\bf 1}_{0 {\longleftrightarrow}\mathsf S_n}$.
We now bound the revealment of $\mathsf T_k$. 
 
When $x\in\eps\bbZ^d$ is revealed, there exist $y\in \eps\bbZ^d$ and $y'\in\mathsf R_y^\eps$ such that $x\in D(y)$ and $y' {\longleftrightarrow}\mathsf S_k$. This $y'$ belongs to $\mathsf R_z^1(=z+[0,1)^d)$ for some $z\in\bbZ^d$. Note that in this case, the fact that $x\in D(y)$ implies in particular that $\eta^b$ does not intersect the Euclidean ball of radius $\|x-z\|-3\sqrt d$ around $z$ since otherwise the color of any point in $\mathsf R_z^1$ is independent of the colors of points in $\mathsf R_x^\varepsilon$. Let $E_z$ be this last event (which is decreasing). We find
\begin{align*}
 \delta_x(\mathsf T_k) 
&\stackrel{\phantom{\rm FKG}}\le
\sum_{z\in\bbZ^d} \bbP_p [\mathsf R_z^1 \longleftrightarrow \mathsf S_k~,~E_z]\stackrel{\rm FKG}\le
\sum_{z\in\bbZ^d} \bbP_p [\mathsf R_z^1 \longleftrightarrow \mathsf S_k]\cdot\bbP_p[E_z].
\end{align*}
A standard estimate on Poisson Point Processes in $\bbR^d$ implies that 
\begin{equation}\label{eq:cde}\bbP_p[E_z]\le \tfrac{1}{c_2}\exp(-c_2 \|z-x\|^d).\end{equation} Furthermore, when $z\in \mathsf B_m$, by choosing a path  $y_0,\dots,y_k=z$ in $\bbZ^d$ with $x\in \mathsf R_{y_0}^1$ and $k\le c_3\|z-x\|$, we deduce that 
\begin{align}\bbP_p[x  \longleftrightarrow  \mathsf S_k \vert  \mathsf R_z^1  \longleftrightarrow   \mathsf S_k ]&\stackrel{\rm FKG}\ge\bbP_p[x \longleftrightarrow \mathsf R_{z}^1\ ,\ \mathsf R_{z}^1\,\text{all black}]\nonumber\\
& \stackrel{\rm FKG}\geq \prod_{i=1}^k\bbP_p[\mathsf R_{y_i}^1\,\text{all black}]\nonumber\\
& \stackrel{\phantom{\rm FKG}}\ge \exp(-c_4\|z-x\|)\label{eq:cdef} .\end{align}
(In the last inequality we used that $p\ge\delta$.) The bounds \eqref{eq:cde} and \eqref{eq:cdef} imply that
\begin{align*}
  \delta_x(\mathsf T_k) &\le \bbP_p [x \longleftrightarrow \mathsf S_k]\sum_{z\in\bbZ^d}\exp(c_4\|z-x\|)\cdot \tfrac1{c_2}\exp(-c_2\|z-x\|^d)\le  c_0  \bbP_p [x\longleftrightarrow \mathsf S_k],
\end{align*}
which concludes the proof.
\end{proof}

\section{Proof of Corollary~\ref{cor:main}}\label{sec:2}

%We only sketch the proof since it is a very simple application of the classical arguments for percolation.

Let $A_n$ be the event that $\Lambda_n:=[-n,n]^2$ is crossed by a continuous path of black points going from left to right. Since the complement of $A_n$ is the event that there is a continuous path of white vertices from top to bottom, which has the same probability, we deduce that 
\begin{equation}\bbP_{1/2}[A_n]=1/2.\label{eq:yy}\end{equation}

In particular, \eqref{eq:yy} implies that $\bbP_{1/2}[\mathsf B_1\longleftrightarrow \mathsf S_n]\ge 1/n$ so that 
$$\bbP_{1/2}[0\longleftrightarrow \mathsf S_n]\stackrel{\rm FKG}\ge \bbP_{1/2}[\mathsf B_1\longleftrightarrow \mathsf S_n]\bbP_{1/2}[\mathsf B_1\text{ all black}]\ge \tfrac{1}n\bbP_{1/2}[\mathsf B_1\text{ all black}].$$
Since this quantity does not decay exponentially fast, we deduce that $p_c\le 1/2$.

The square-root trick (using the FKG inequality) implies that for any $n\ge k\ge1$,
$$\bbP_{1/2}[\mathsf B_k\text{ is connected in $\Lambda_n$ to the top of }\Lambda_n]\ge 1-\bbP_{1/2}[\mathsf B_k\not\longleftrightarrow \infty]^{1/4}$$
 so that 
$$\bbP_{1/2}[\mathsf B_k\text{ is connected in $\Lambda_n$ to the top and bottom of }\Lambda_n]\ge 1-2\bbP_{1/2}[\mathsf B_k\not\longleftrightarrow \infty]^{1/4}.$$
Now, the uniqueness of the infinite connected component \cite{BolRio06b} when it exists implies that 
$$\liminf_{n\rightarrow\infty}\bbP_{1/2}[A_n]\ge 1-2\bbP_{1/2}[\mathsf B_k\not\longleftrightarrow \infty]^{1/4}.$$
Assume for a moment that $\theta(1/2)>0$. Letting $k$ tend to infinity, we would deduce that $\bbP_{1/2}[A_n]$ tends to 1 which would contradict \eqref{eq:yy}. This implies $\theta(1/2)=0$ and $p_c\ge1/2$.

\section{Proof of Lemma~\ref{lem:technical}}

 %Since $f_n(\beta)\le f_1(\beta)\le f_1(\beta_0)$ for any $\beta\le \beta_0$ and $n\ge1$, set $c:=c'(1-f_1)$ so that for any $n\ge1$, \eqref{eq:mlem2} becomes
% \begin{equation}\label{eq:mlem}f_n'\ge c\frac{n}{\Sigma_n}f_n.\end{equation}
Define 
$\displaystyle \beta_1 := \inf \big\{ \beta \, : \, \limsup_{n \rightarrow \infty} \frac{\log \Sigma_n(\beta)}{\log n} \geq 1 \big\}. $

\paragraph{Assume $\beta<\beta_1$.} Fix $\delta>0$ and set $\beta'=\beta-\delta$ and $\beta''=\beta-2\delta$. We will prove that there is exponential decay at $\beta''$ in two steps. 

First, there exists an integer $N$ and $\alpha>0$  such that $\Sigma_n(\beta) \leq n ^ {1-\alpha}$ for all $n \geq N$. For such an integer $n$, integrating $f_n' \geq n^{\alpha} f_n$  between $\beta'$ and $\beta$ -- this differential inequality follows from \eqref{eq:mlem}, the monotonicity of the functions $f_n$ (and therefore $\Sigma_n$) and the previous bound on $\Sigma_n(\beta)$ -- implies that
$$ f_n(\beta') \leq M\exp(-\delta \,n^\alpha),\quad\forall n\ge N.$$

Second, this implies that there exists $\Sigma < \infty$ such that $\Sigma_n(\beta') \leq \Sigma$ for all $n$. Integrating $f_n' \geq \tfrac{n}{\Sigma} f_n$ for all $n$ between $\beta''$ and $\beta'$ -- this differential inequality is again due to \eqref{eq:mlem}, the monotonicity of $\Sigma_n$, and the bound on $\Sigma_n(\beta')$ -- leads to
%This implies (by integrating \eqref{eq:mlem} between $p-\delta$ and $p-2\delta$)
\begin{equation*}f_n (\beta'') \leq M\exp(-\frac{\delta}{\Sigma} \,n),\quad\forall n\ge0.\label{eq:bb}\end{equation*}

\paragraph{Assume $\beta>\beta_1$.} For $n\ge 1$, define the function $T_n := \frac{1}{\log n} \sum_{i=1}^{n} \frac{f_i}{i}$. Differentiating $T_n$ and using \eqref{eq:mlem}, we obtain
$$T_n'~=~ \frac{1}{\log n}\, \sum_{i=1}^{n} \frac{f_i'}{i}  ~\stackrel{\eqref{eq:mlem}}{\geq}~ \frac{1}{\log n}\, \sum_{i=1}^{n} \frac{f_i}{\Sigma_{i}} ~\geq~ \frac{\log \Sigma_{n+1}-\log \Sigma_1}{\log n},$$
where in the last inequality we used that for every $i\ge1$,
$$\frac{f_i}{\Sigma_{i}} \geq \int_{\Sigma_{i}}^{\Sigma_{i+1}} \frac{dt}{t}=\log \Sigma_{i+1}-\log \Sigma_{i}.$$ 
For $\beta'\in(\beta_1,\beta)$, using that $\Sigma_{n+1}\ge\Sigma_n$ is increasing and integrating the previous differential inequality between $\beta'$ and $\beta$ gives
$$T_n (\beta) - T_n(\beta') \geq (\beta - \beta')\,  \frac{\log \Sigma_{n} (\beta')-\log M}{\log n}.$$
Hence, the fact that $T_n(\beta)$ converges to $f(\beta)$ as $n$ tends to infinity implies 
\begin{equation*} f (\beta) -f(\beta') %= \lim_{n \rightarrow \infty} T_n (p) -T_n(p_1)
~\geq~  (\beta - \beta')  \, \big[\limsup_{n\rightarrow \infty} \frac{\log \Sigma_n (\beta')}{\log n}\big]~\geq~ \beta- \beta'.
\end{equation*}
Letting $\beta'$ tend to $\beta_1$ from above, we obtain
$
f(\beta)\ge \beta-\beta_1.
$

\paragraph{Acknowledgments} 
The authors are thankful to Asaf Nachmias for reading the manuscript and his helpful comments. 
This research was supported by the IDEX grant from Paris-Saclay, a grant from the Swiss FNS, and the NCCR SwissMAP.

\bibliographystyle{alpha}
\bibliography{biblicomplete}
\end{document}